\documentclass[final,1p,times]{elsarticle}
%% The amsthm package provides textended theorem environments
\usepackage{amsmath}
\allowdisplaybreaks[4]
\usepackage{amsthm}
\usepackage{amscd}
\RequirePackage{CJKnumb}
\usepackage{amsfonts}
\usepackage{amssymb}
\usepackage{graphicx}
\usepackage{caption} 
\usepackage{subfigure}
\usepackage{subcaption}
\usepackage{booktabs}
\biboptions{sort&compress}
\usepackage{xcolor}
\usepackage[pagebackref, colorlinks, citecolor=green, linkcolor=red, urlcolor=blue]{hyperref}
\numberwithin{equation}{section}
\newtheorem{theorem}{Theorem}[section]
\newtheorem{lemma}[theorem]{Lemma}
\usepackage{mathrsfs}
\usepackage{titletoc}
\usepackage{float}
\usepackage{longtable}
\usepackage{adjustbox}
\journal{***}
\begin{document}
	\begin{frontmatter}
		\title{Existence and asymptotic analysis of topological solutions for generalized Chern--Simons equations on discrete lattice graphs}
		\author{Songbo Hou \corref{cor1}}
		\ead{housb@cau.edu.cn}
		\address{Department of Applied Mathematics, College of Science, China Agricultural University,  Beijing, 100083, P.R. China}
		
		\begin{abstract}
		We study a class of generalized Chern-Simons equations on discrete lattice graphs. By an iterative scheme combined with an exhaustion argument,  we  establish the existence of topological solutions, which is also the maximal topological solution. We further examine the behavior of the maximal topological solution as the parameter tends to either infinity or zero. The present work extends the results of Hua et al., arXiv:2310.13905
		(2023) and Hou and Kong, Calc. Var. Partial Differ. Equ. 64(3), 77 (2025).

		\end{abstract}	
		\begin{keyword} Chern-Simons equation \sep lattice graph \sep topological solution\sep asymptotic behavior 
			\MSC [2020] 35A01  35A16 35J91 35R02
		\end{keyword}
	\end{frontmatter}
	
	\section{Introduction}
	
The Chern-Simons equation originates from three-dimensional topological field theory and serves as a fundamental model in theoretical physics for exploring the interaction between gauge fields and topological structures. Initially introduced by S.-S. Chern and J. Simons, the equation provides a key Lagrangian framework for constructing three-dimensional gauge theories, and has found broad applications in mathematical physics, gauge field theory, and condensed matter physics. Its core idea lies in using the Chern-Simons action to describe the geometric and topological properties of gauge fields, thereby revealing the intrinsic symmetries of the system in a manner independent of the underlying metric.

In mathematical studies, the Chern-Simons equation is closely related to gauge theory \cite{MR2177747}, differential geometry \cite{MR0353327}, nonlinear partial differential equations \cite{MR1324400}, and harmonic map theory \cite{MR1204317}. It exhibits rich analytical features, particularly in the investigation of vortex-type solutions, multi-solution structures, stability analysis, and energy estimates.

Recently, the study of Chern--Simons equations has been extended from continuous domains to discrete graphs. Huang et al.~\cite{huang2020existence} investigated the following equation
\begin{equation}\label{1.1}
	\Delta f = \lambda e^f(e^f - 1) + 4\pi \sum_{j=1}^M \delta_{p_j}
\end{equation}
on finite graphs, where $\lambda>0$ denotes a parameter, $M$ is a positive integer, $p_j$ represents a vertex of the graph, and $\delta_{p_j}$ stands for the Dirac delta mass centered at $p_j$. It was shown that a critical parameter $\lambda_c$ exists such that Eq.~\eqref{1.1} has a solution for $\lambda>\lambda_c$, whereas no solution occurs for $\lambda<\lambda_c$. In~\cite{hou2022existence}, Hou and Sun studied a class of generalized Chern--Simons equations on finite graphs and applied their framework to Eq.~\eqref{1.1}, proving that a solution also exists when $\lambda = \lambda_c$. 

Using topological degree theory, Li et al.~\cite{li2023topological} reestablished that Eq.~\eqref{1.1} on finite graphs admits multiple solutions.
 On infinite lattice graphs, Hua et al.~\cite{hua2023existence} studied the existence and decay behavior of topological solutions to Eq.~\eqref{1.1}.

Chen and Han~\cite{MR3107578} considered \eqref{1.2} on a doubly periodic domain in $\mathbb{R}^2$ and showed that the solvability depends on the parameter $\lambda$.
Subsequently, as a natural extension of \eqref{1.1}, Gao and Hou~\cite{gao2022existence} studied
\begin{equation}\label{1.2}
	\Delta f = \lambda e^f\bigl(e^{a f} - 1\bigr) + 4\pi \sum_{j=1}^M n_j \delta_{p_j},
\end{equation}
where $a>0$, each $n_j$ is a positive integer, and $\delta_{p_j}$ is defined as in \eqref{1.1}.
They established existence results and further proved the multiplicity of solutions.

The study of Chern--Simons models on discrete graphs also encompasses Chern--Simons systems. Huang et al.~\cite{huang2021mean} investigated a class of Chern--Simons systems, establishing the existence of a maximal solution and further proving the multiplicity of solutions, including one that is a local minimizer of the associated energy functional and another of mountain-pass type. Chao et al.~\cite{chao2022existence} extended the work of~\cite{huang2021mean} by employing the method of upper and lower solutions combined with a priori estimates to analyze a class of generalized Chern--Simons systems.

Further studies on Chern--Simons models on discrete graphs include~\cite{CHAO2023126787, MR4781956, hou2024topological, lu2021existence, MR4725973, MR4669943, MR4873677}, which explore various existence, uniqueness, and asymptotic properties of solutions.

	This work focuses on the discrete analogue of Eq.~\eqref{1.2} posed on infinite graphs.  Before that, we first introduce some basic concepts of graphs.  Let \( G = (V, E) \) denote a graph with vertex set \( V \) and edge set \( E \).  Each edge $xy\in E $ is assigned a weight $\omega_{xy}$, which is positive and symmetric. In the following, we consider lattice graphs, which are a special class of graphs. Their vertex set $V$ consists of all vectors 
$x= (x_1, \ldots, x_n)  $ whose components are integers.  Their edge set $E$ consists of all pairs $x y$ such that $d(x, y)=1$, where

$$
d(x, y)=\sum_{i=1}^n\left|x_i-y_i\right|
$$
denotes the lattice (Manhattan) distance between $x$ and $y$. For $n \geq 2$,  we denote the lattice graph by $\mathbb{Z}^n=(V, E)$, where the weight on each edge $x y$ is $\omega_{x y}=1$ if $x y \in E$. We also write $y \sim x$ to indicate that $y$ is adjacent to $x$. Let \( \Omega \subset \mathbb{Z}^n \) be a finite subset.  
	The boundary of \( \Omega \), denoted by \( \partial \Omega \), is defined as the set of all points not in \( \Omega \) but at distance one from some point of \( \Omega \).
	 We also denote the closure of $\Omega$ by $\bar{\Omega}=\Omega \cup \partial \Omega$.

Next, we define the operators and function spaces on \( \mathbb{Z}^n \) that will be used throughout the paper.
 For a subset $\Omega \subset V$, let $C(\Omega)$ denote the set of all real-valued functions defined on $\Omega$. The measure on $V$ is taken to be uniform, with $\mu(x)=1$ for all $x \in V$. For any $f \in C(\Omega)$, its integral over \( \Omega \) is defined as
$$
\int_{\Omega} f d \mu=\sum_{x \in \Omega} f(x).
$$

Similar to the Euclidean setting, for $1 \leq p<\infty$, we can define the space $L^p(V)$, consisting of realvalued functions on $V$ with the norm

$$
\|f\|_p=\left(\int_{V} |f(x)|^p d \mu\right)^{\frac{1}{p}}.
$$
Similarly, we define the space $L^{\infty}(V)$, with the norm given by
$$
\|f\|_{\infty}=\sup _{x \in V}|f(x)| .
$$

For any $f \in C(V)$, we define the Laplacian operator as

$$
\Delta f(x)=\sum_{y \sim x}(f(y)-f(x)).
$$
We define the discrete gradient along an edge $xy$ as $\nabla f(x, y) = f(y) - f(x)$. Then the pointwise inner product of gradients is given by
\[
\langle \nabla f, \nabla g \rangle(x) = \frac{1}{2} \sum_{y \sim x} \nabla f(x, y) \nabla g(x, y).
\]
When $f=g$, we denote

$$
|\nabla f|^2(x)=\frac{1}{2} \sum_{y \sim x}(f(y)-f(x))^2,
$$
which represents the squared norm of the discrete gradient of $f$ at the vertex $x$.

Let $d(x)=d(x, 0)$ denote the distance from the origin. In this paper, we study Eq.~\eqref{1.2} on the lattice $\mathbb{Z}^n$ with 
$$
\delta_{p_j}(x)= \begin{cases}1, & \text { if } x=p_j, \\ 0, & \text { if } x \neq p_j.\end{cases}
$$
We are interested in topological solutions, namely, solutions satisfying $f(x) \rightarrow 0$ as $d(x) \rightarrow \infty$. The first result of this paper is presented below.
\begin{theorem}\label{th1}
There exists a topological solution \( f_{\lambda} \in L^{p}(V) \) for all \( 1 \leq p \leq \infty \) to Eq.~\eqref{1.2}, which is also the maximal solution. Moreover, for any $0<\varepsilon<1$, the following decay estimate holds:
\[
f_{\lambda}(x) = O\left(e^{-\alpha(1 - \varepsilon) d(x)}\right),
\]
where \( \alpha = \ln \left(1 + \frac{\lambda a}{2n} \right) \).

\end{theorem}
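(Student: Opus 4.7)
The plan is to produce the topological solution by a two-stage limiting procedure: monotone iteration on finite boxes to build a family of Dirichlet solutions, followed by an exhaustion argument that extracts a limit on all of $\mathbb{Z}^n$. For any finite $\Omega \subset \mathbb{Z}^n$ containing every $p_j$, I would introduce the Dirichlet problem on $\Omega$ with zero boundary values and the linearised iteration
\[
(\Delta - K) f_{k+1} = F(f_k) - K f_k + 4\pi \sum_{j=1}^M n_j \delta_{p_j}, \qquad f_0 \equiv 0,
\]
where $F(s) = \lambda e^s(e^{as} - 1)$ and $K > 0$ is chosen larger than $\sup_{s \le 0} F'(s)$ so that $s \mapsto F(s) - Ks$ is non-increasing on $(-\infty,0]$. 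The operator $-\Delta + K$ with vanishing boundary data is positive and order-preserving, so each $f_{k+1}$ is well defined. A short induction using the discrete maximum principle then shows that $\{f_k\}$ is monotone decreasing and bounded above by $0$, while the associated energy functional $J(f) = \tfrac12 \|\nabla f\|_2^2 + \sum_\Omega \Phi(f) + 4\pi \sum_j n_j f(p_j)$, with $\Phi' = F$, supplies the lower bound needed to pass to the limit and obtain a Dirichlet solution $f_\Omega \le 0$.

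Next I would take the exhaustion $\Omega_\ell = \{x \in V : d(x) \le \ell\}$ and compare $f_{\Omega_\ell}$ with $f_{\Omega_{\ell+1}}$. Since $f_{\Omega_{\ell+1}} \le 0 = f_{\Omega_\ell}$ on $\partial \Omega_\ell$, a shifted maximum-principle comparison (again using the constant $K$) gives $f_{\Omega_{\ell+1}} \le f_{\Omega_\ell}$ on $\Omega_\ell$, so the sequence decreases pointwise. Uniform lower bounds independent of $\ell$ must be extracted from the energy $J$ together with the nonlinear dissipation encoded in $F$. Once these are in hand, the monotone pointwise limit $f = \lim_\ell f_{\Omega_\ell}$ solves Eq.~\eqref{1.2} on $V$ after passing to the limit in the equation, and its maximality among non-positive solutions is inherited directly from the monotone construction.

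For the exponential decay, set $u = -f \ge 0$; then $\Delta u = h(u)$ on $V \setminus \{p_1,\ldots,p_M\}$, where $h(u) = \lambda e^{-u}(1 - e^{-au})$ satisfies $h(u) \le \lambda a u$ and is strictly increasing on a neighbourhood of $0$. Because $f$ is topological, for each $\varepsilon \in (0,1)$ one can choose $R$ large enough that $u$ falls within this monotone range on $V \setminus B_R$. For the candidate super-solution $v(x) = A e^{-\alpha(1-\varepsilon) d(x)}$, a direct lattice computation yields
\[
\frac{\Delta v(x)}{v(x)} = k(x)\bigl(e^{\alpha(1-\varepsilon)} - 1\bigr) + \bigl(2n - k(x)\bigr)\bigl(e^{-\alpha(1-\varepsilon)} - 1\bigr),
\]
where $k(x) \in \{1,\ldots,n\}$ counts the nonzero coordinates of $x$. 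Checking this expression against $\lambda a$ for each admissible $k$ shows that with the stated $\alpha$ the inequality $\Delta v \le \lambda a v$ holds with a strictly positive margin uniform in $x$. Choosing $A$ large enough that $v \ge u$ on $\partial B_R$ and enlarging $R$ to absorb the $O(v^2)$ deficit $\lambda a v - h(v)$ into this margin, the discrete weak maximum principle then delivers $u \le v$ on $V \setminus B_R$. Summability of $e^{-\alpha(1-\varepsilon) d(x)}$ on $\mathbb{Z}^n$ finally yields $f \in L^p(V)$ for every $p \in [1,\infty]$.

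The most delicate step, I expect, is the uniform lower bound along the exhaustion, which must hold without the help of a Green's function (notably absent for $n=2$); the energy $J$ combined with a carefully chosen local sub-solution near each vortex $p_j$ should bridge that gap. A secondary subtlety is the verification of the super-solution property near the coordinate hyperplanes, where a vertex has fewer inward neighbours in the Manhattan metric and the margin above is smallest; the factor $(1-\varepsilon)$ in the exponent provides the slack needed to handle these vertices uniformly.
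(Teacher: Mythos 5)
Your overall architecture coincides with the paper's: the same monotone iteration $(\Delta-K)f_{k+1}=F(f_k)-Kf_k+g$ with $K>\sup_{s\le 0}F'(s)=\lambda a$, the same energy functional (your $J$ is the paper's $I_{\bar\Omega}$ up to an additive constant), the same exhaustion by nested domains with monotone comparison, and essentially the same barrier $e^{-\alpha(1-\varepsilon)d(x)}$ for the decay, including the correct case analysis at vertices with zero coordinates. The decay computation you sketch is sound and matches the paper's (with signs flipped via $u=-f$).

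However, there is a genuine gap at exactly the point you flag as ``the most delicate step'': you never actually establish a lower bound on the exhaustion sequence that is uniform in the domain, and without it the monotone limit $f=\lim_\ell f_{\Omega_\ell}$ could be identically $-\infty$. Saying that the bound ``must be extracted from the energy $J$'' and that ``a carefully chosen local sub-solution near each vortex should bridge that gap'' is not an argument: a local sub-solution pinned near each $p_j$ does not prevent the solutions from drifting to $-\infty$ globally, and a global sub-solution on $\mathbb{Z}^2$ is precisely what recurrence makes hard to write down. The paper closes this gap quantitatively in Lemma~\ref{le2.3} and Lemma~\ref{le2.4}: first the pointwise coercivity
\[
\frac{(e^{(a+1)x}-1)+(a+1)(1-e^x)}{a+1}\;\ge\; c\left(\frac{|x|}{1+|x|}\right)^{2},\qquad x\le 0,
\]
which makes the potential part of the energy control $\int_\Omega\bigl(\tfrac{|f_k|}{1+|f_k|}\bigr)^2 d\mu$; and second the discrete Sobolev-type inequality \eqref{2.12} on $\mathbb{Z}^n$ (valid for all $n\ge 2$, hence bypassing the Green's function issue you worry about), which, via the interpolation $\|f_k\|_{L^4}^4\le C\|f_k\|_{L^2}^2\|\nabla f_k\|_{L^2}^2$ and a Young-inequality bootstrap, converts $I_{\bar\Omega}(f_k)\le 0$ into $\|f_k\|_{L^2(\Omega)}\le C$ with $C$ independent of $\Omega$. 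Some substitute for this pair of estimates is indispensable; as written, your construction of the global solution does not go through.
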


As the parameters vary, the asymptotic behaviors of solutions to Chern–Simons systems and equations on discrete graphs have recently been studied. On finite graphs, Hou and Kong~\cite{houkong2022existence} established the existence of solutions and derived their asymptotic behavior as the  parameter $\lambda\to\infty$. Subsequently, Liu~\cite{liu2025existence} investigated a skew-symmetric Chern--Simons system on lattice graphs and derived precise asymptotic behaviors of topological solutions as $\lambda$ approaches either $+\infty$ or $0^{+}$.

For the infinite lattice $\mathbb{Z}^n$, let $G_n$ denote the Green's function of the discrete Laplacian. It is uniquely determined by
\[
\Delta G_n=\delta_0 \quad\text{in }\mathbb{Z}^n,\qquad 
\lim_{d(x)\to\infty} G_n(x)=0.
\]
A well-known Fourier representation of $G_n$ is
\[
G_n(x)= -\frac{1}{(2\pi)^n}\int_{[-\pi,\pi]^n}
\frac{e^{i z\cdot x}}{\,2n - 2\sum_{j=1}^{n}\cos z_j\,}\,dz,
\]
where $x\in\mathbb{Z}^n$ and $z=(z_1,\ldots,z_n)\in\mathbb{R}^n$ \cite{MR4448687}. 

Motivated by the approach developed in~\cite{liu2025existence}, we establish the following result.

\begin{theorem}\label{th2}
	Let $f_{\lambda}$ be the maximal topological solution to Eq.~\eqref{1.2}. Then the following asymptotic behaviors hold:
	\begin{enumerate}
		\item[(i)] $f_{\lambda} \to 0$ \quad as \quad $\lambda \to \infty$;
		\item[(ii)] if $n=2$, then $f_{\lambda} \to -\infty$ \quad as \quad $\lambda \to 0^+$;
		\item[(iii)] if $n=3$, then 
$f_{\lambda} \to 4 \pi \sum\limits_{j=1}^{M} n_j G_n(x-p_j) \quad \text{as} \quad \lambda \to 0^+.
		$
	\end{enumerate}
\end{theorem}

In Section 2, we prove Theorem \ref{th1}. We begin by proving that the equation admits a solution in a bounded domain \( \Omega \). To achieve this, we employ an iterative method beginning with the initial function $f_0=0$, which generates a monotone decreasing sequence $\left\{f_k\right\}$. If this sequence converges, its limit yields a solution to the equation on $\Omega$.

To investigate convergence, we introduce an associated energy functional $I_{\bar{\Omega}}(f_k)$, and show that it is monotone decreasing and bounded from above. Based on this functional, we derive estimates that imply the uniform boundedness of the sequence $\left\|u_k\right\|_{L^{2}(\Omega)}$. Since $L^{2}(\Omega)$ is finite- dimensional, this ensures the existence of a solution on $\Omega$.

We then consider an increasing sequence of domains $\Omega_0 \subset \Omega_1 \subset \cdots \subset \Omega_i \subset \cdots$, and examine the corresponding sequence of solutions. We prove that this sequence converges, and its limit defines a global solution on $V$, which is also a topological solution. 

In section 3, we prove Theorem \ref{th2}.  We first prove that for sufficiently large $\lambda$, $f_{\lambda}$ is bounded below. Furthermore, this lower bound tends to zero as $\lambda \to \infty$. Combining this with the fact that $f_{\lambda} \le 0$, we conclude that $f_{\lambda} \to 0$ as $\lambda \to \infty$.

In the case $n=2$, we  first assume that $f_{\lambda}$ converges to a limit function $f$ as $\lambda \to 0^+$. A contradiction is then derived by analyzing the solution on finite cubes, which implies that the  assumption is false. Consequently, there must exist a point $x_0$ such that $f_{\lambda}(x_0) \to -\infty$ as $\lambda \to 0^+$. Finally, utilizing the connectivity of the graph $\mathbb{Z}^2$ and local discrete estimates, we show that this blow-up holds for  all points.

In the case $n=3$, we define the function
$
\phi_n(x)=4 \pi \sum\limits_{j=1}^{M} n_j G_n(x-p_j).
$
We first establish the uniform bounds $\phi_n(x) \le f_\lambda(x) \le 0$. Assuming that $f(x) = \lim_{\lambda\rightarrow 0^+}f_\lambda(x)$,   we  get 
$
f(x) = \phi_n(x).
$

Our method is inspired by the approaches developed in \cite{MR1320569,hua2023existence,hou2024topological,liu2025existence}. 

\section{Proof of Theorem \ref{th1}}

	In this section, we employ variational methods to prove Theorem \ref{th1}. We begin by considering the Cauchy problem for Eq.~\eqref{1.2} on a bounded domain and establish the existence of a solution. We construct a sequence of functions via an iterative scheme.
	
	We first choose a bounded domain $\Omega_{0}$ that contains all the singularities $\{p_j\}$. Then, we select a larger bounded and connected domain $\Omega$ such that $\Omega_{0} \subset \Omega$.
	
	Let
	$$
	g=4 \pi \sum_{j=1}^M n_j \delta_{p_j}, \quad \text { and } \quad N=4 \pi \sum_{j=1}^M n_j.
	$$

	Fix a constant $K>a \lambda$. Starting from the initial value $f_0=0$, we define the following iteration scheme:
	
\begin{equation}\label{2.1}
	\begin{cases}(\Delta-K) f_k=\lambda e^{f_{k-1}}(e^{af_{k-1}}-1)+g-K f_{k-1}, & \text { in } \Omega, \\ f_k=0, & \text { on } \partial \Omega.\end{cases}
\end{equation}

We next present the following lemma:
	
	\begin{lemma}\label{lemma 2.1}
Let $\left\{f_k\right\}$ denote the sequence determined by the iteration scheme (\ref{2.1}). Then the sequence is monotone decreasing and satisfies
$$
0=f_0 \geq f_1 \geq f_2 \geq \cdots.
$$
\end{lemma}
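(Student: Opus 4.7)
The plan is to argue by induction on $k$, using the discrete maximum principle for the operator $\Delta-K$ together with a linearization of the nonlinearity via the mean value theorem. A preliminary remark: each iterate $f_k$ is well-defined, since $K-\Delta$ with zero Dirichlet data on the bounded connected set $\Omega$ is invertible once $K>0$. The form of the maximum principle I will invoke is: if $(\Delta-K)u\geq 0$ in $\Omega$ with $K>0$ and $u\leq 0$ on $\partial\Omega$, then $u\leq 0$ throughout $\bar\Omega$. Indeed, at any interior maximizer $x_0\in\Omega$ of $u$ one has $\Delta u(x_0)\leq 0$, so if additionally $u(x_0)>0$ then $(\Delta-K)u(x_0)<0$, contradicting the hypothesis.

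For the base case, taking $k=1$ in \eqref{2.1} with $f_0\equiv 0$ gives $(\Delta-K)f_1=g\geq 0$ in $\Omega$ and $f_1=0$ on $\partial\Omega$, so the maximum principle immediately yields $f_1\leq 0=f_0$. For the inductive step, assume $0=f_0\geq f_1\geq\cdots\geq f_{k-1}$ and set $w=f_k-f_{k-1}$. Subtracting consecutive lines of \eqref{2.1} and writing $F(t)=\lambda e^t(e^{at}-1)=\lambda(e^{(1+a)t}-e^t)$, the mean value theorem produces
$$
(\Delta-K)w=[F'(\xi)-K]\,(f_{k-1}-f_{k-2})
$$
for some $\xi$ lying between $f_{k-1}$ and $f_{k-2}$, hence $\xi\leq 0$ by the induction hypothesis.

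The crux is then to control the sign of $F'(\xi)-K$. A direct computation gives $F'(t)=\lambda\bigl((1+a)e^{(1+a)t}-e^t\bigr)$, and since $a>0$ one has $e^{(1+a)t}\leq e^t$ for all $t\leq 0$, which yields $F'(t)\leq \lambda a e^t \leq \lambda a$ on $(-\infty,0]$. Combined with the standing choice $K>a\lambda$ this forces $F'(\xi)-K<0$; together with $f_{k-1}-f_{k-2}\leq 0$ from the induction hypothesis, it follows that $(\Delta-K)w\geq 0$ in $\Omega$ and $w=0$ on $\partial\Omega$. The maximum principle then gives $w\leq 0$, i.e.\ $f_k\leq f_{k-1}$, closing the induction. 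The main obstacle is precisely arranging the sign of $F'(\xi)-K$, and this is what the choice $K>a\lambda$ accomplishes, provided the iterates remain nonpositive so that the relevant $\xi$ stay in $(-\infty,0]$; on the full real line one would only have the weaker bound $F'(t)\leq(1+a)\lambda$, which need not fall below $K$, so the nonpositivity produced by the preceding inductive stage is essential for the next step to go through.
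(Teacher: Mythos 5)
Your proposal is correct and follows essentially the same route as the paper: the base case via the maximum principle applied to $(\Delta-K)f_1=g\ge 0$, then an inductive step using the mean value theorem and the bound $F'(\xi)\le \lambda a<K$ for $\xi\le 0$ to get $(\Delta-K)(f_k-f_{k-1})\ge 0$. The only cosmetic difference is that you justify solvability of each linear step by invertibility of $K-\Delta$ on the finite-dimensional space, whereas the paper minimizes a coercive quadratic functional; both are valid and equivalent here.
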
	
\begin{proof}
We denote by \( C_0(\bar{\Omega}) \) the space of functions defined on \( \bar{\Omega} \) that vanish on the boundary \( \partial \Omega \).
    We begin by showing that, for each \( v \in C_0(\bar{\Omega}) \), the boundary value problem
    \begin{equation}\label{2.2}
    	\begin{cases}
    		(\Delta - K)u = v, & \text{in } \Omega, \\
    		u = 0, & \text{on } \partial \Omega,
    	\end{cases}
    \end{equation}
    has a solution \( u \in C_0(\bar{\Omega}) \). The argument follows the same lines as the proof of Theorem~2.1 in~\cite{ge2020p} and Lemma~2.2 in~\cite{houkong2022existence}.
     Let us introduce the associated functional

$$
F(u)=\frac{1}{2} \int_{\bar{\Omega}}|\nabla u|^2 d \mu+\frac{1}{2} \int_{\Omega} K u^2 d \mu+\int_{\Omega} v u d \mu, \quad u \in C_0(\bar{\Omega}),
$$
where $$\int_{\bar{\Omega}}|\nabla u|^2 d \mu=\frac{1}{2}\sum\limits _{\substack{ x,y\in  \bar{\Omega}\\x\sim y}}(u(y)-u(x))^2.$$
Any critical point of the functional $F(u)$ yields a solution to Eq.~\eqref{2.2}. For any test function $\phi \in C_0(\bar{\Omega})$, a straightforward calculation yields
$$
\left.\frac{d}{d t}\right|_{t=0} F(u+t \phi)=-\int_{\Omega}(\Delta u-K u-v) \phi d \mu.
$$
Therefore, the condition
\[
\left.\frac{d}{dt}\right|_{t=0} F(u + t\phi) = 0
\]
is satisfied for every \( \phi \in C_0(\bar{\Omega}) \) precisely when \( u \) solves
\[
\Delta u - K u = v.
\]
Thus, solving Eq.~\eqref{2.2} reduces to minimizing the functional \( F(u) \).
 Note that, by the Cauchy-Schwarz inequality, it follows that
$$
\left|\int_{\Omega} v u d \mu\right| \leq\|v\|_{L^2(\Omega)}\|u\|_{L^2(\Omega)}.
$$ 
We can obtain the following inequality:
$$
F(u) \geq \frac{1}{2} \int_{\bar{\Omega}}|\nabla u|^2 d \mu+\frac{1}{2} \int_{\Omega} K u^2 d \mu-\|v\|_{L^2(\Omega)}\|u\|_{L^2(\Omega)}
$$
which leads to
$$
F(u) \geq \frac{K}{2}\|u\|^2_{L^2(\Omega)}-\|v\|_{L^2(\Omega)}\|u\|_{L^2(\Omega)}
$$
As a result, we have
$$
E(u) \rightarrow+\infty \quad \text { as }  \quad\|u\|_{L^2(\Omega)}^2 \rightarrow+\infty.
$$
Note that $\|u\|_{L^2(\Omega)}^2 \rightarrow+\infty$ is equivalent to $\sup _{x \in \bar{\Omega}}|u(x)| \rightarrow+\infty$, and that $C_0(\bar{\Omega})$ is finite-dimensional. It follows that the functional $F(u)$ achieves its minimum at a function $u \in C_0(\Omega)$, and this function is a solution to Eq.~\eqref{2.2}.

To proceed, we begin by analyzing the iteration process introduced earlier. It can be seen that $f_1$ satisfies the boundary value problem:
\begin{equation}\label{2.3}
\begin{cases}(\Delta-K) f_1=g, & \text { in } \Omega, \\ f_1=0, & \text { on } \partial \Omega .\end{cases}
\end{equation} Noting that $g \in C_0(\bar{\Omega})$, the existence of $f_1$ follows directly from the previous claim. Furthermore, the maximum principle (Lemma 2.2 in \cite{hua2023existence}) guarantees $f_1\leq 0$. 
By induction, assume that
$$
0=f_0 \geq f_1 \geq f_2 \geq \cdots \geq f_{k-1}.
$$
Observe that the expression on the right of Eq.~(\ref{2.1}),
\[
\lambda e^{f_{k-1}}(e^{a f_{k-1}} - 1) + g - K f_{k-1},
\]
lies in \( C_0(\bar{\Omega}) \), the existence of $f_k$ then follows from  the previous claim.

Using the mean value theorem, we obtain
$$\begin{aligned}(\Delta-K)\left(f_{k}-f_{k-1}\right) & =\lambda e^{f_{k-1}}(e^{af_{k-1}}-1)-\lambda e^{f_{k-2}}(e^{af_{k-2}}-1)-K\left(f_{k-1}-f_{k-2}\right) \\ & \geqslant \lambda e^{\xi}[(a+1)e^{a\xi}-1](f_{k-1}-f_{k-2})-K\left(f_{k-1}-f_{k-2}\right) \\ & \geqslant (\lambda a-K)(f_{k-1}-f_{k-2}) .\\ & \geqslant 0,\end{aligned}$$ 
where $\xi$ lies between $f_{k-1}$ and $f_{k-2}$, i.e., $f_{k-1} \leq \xi \leq f_{k-2}$.

Applying the maximum principle again, we conclude that $f_k \leq f_{k-1}$, thereby completing the proof.
\end{proof}	
	
We now construct the functional  corresponding to Eq.~\eqref{1.2}:
$$
	I_{\bar{\Omega}}(f)=\frac{1}{2} \int_{\bar{\Omega}}|\nabla f|^2 d \mu+\frac{\lambda}{a+1} \int_{\Omega}(e^{(a+1)f}-1 )d \mu+\lambda \int_{\Omega} (1-e^{f}) d \mu+\int_{\Omega}gfd\mu.
$$  The critical points of this functional in the space $C_0(\bar{\Omega})$ will yield solutions to Eq.~\eqref{1.2} on the domain $\Omega$.	

	The following theorem shows that applying the functional to the sequence $\left\{f_k\right\}$ obtained in Lemma \ref{lemma 2.1} yields a monotone decreasing sequence, and it also provides an upper bound estimate.
	
\begin{lemma}\label{Lemma 2.2}
Let \( \{f_k\} \) denote the sequence obtained from the iteration~\eqref{2.1}.
 Then we obtain the following inequality:

$$
0 \geqslant I_{\bar{\Omega}}(f_1) \geqslant I_{\bar{\Omega}}(f_2) \cdots \geqslant I_{\bar{\Omega}}(f_k) \geqslant \cdots.
$$

\end{lemma}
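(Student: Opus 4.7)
The plan is to prove the chain in two steps: first the base case $I_{\bar\Omega}(f_0) = 0$, which is immediate because every term of the functional vanishes at $f \equiv 0$; and second the inductive step $I_{\bar\Omega}(f_k) \leq I_{\bar\Omega}(f_{k-1})$ for each $k \geq 1$, which I would prove by a direct energy-difference computation based on Taylor expansion combined with the iteration~\eqref{2.1}.

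To carry out the inductive step, set $\psi_k = f_k - f_{k-1}$; by Lemma~\ref{lemma 2.1} this satisfies $\psi_k \leq 0$ in $\Omega$ and $\psi_k = 0$ on $\partial\Omega$. Expanding $f_k = f_{k-1} + \psi_k$ in the Dirichlet term and using discrete integration by parts (valid because $\psi_k$ vanishes on $\partial\Omega$) gives
\[
\tfrac{1}{2}\int_{\bar\Omega}|\nabla f_k|^2\,d\mu - \tfrac{1}{2}\int_{\bar\Omega}|\nabla f_{k-1}|^2\,d\mu = -\int_\Omega \Delta f_{k-1}\cdot\psi_k\,d\mu + \tfrac{1}{2}\int_{\bar\Omega}|\nabla\psi_k|^2\,d\mu.
\]
For the potential $W(t) = \frac{\lambda}{a+1}(e^{(a+1)t}-1) + \lambda(1-e^t)$, whose derivatives are $W'(t) = \lambda e^t(e^{at}-1)$ and $W''(t) = \lambda e^t((a+1)e^{at}-1)$, a second-order Taylor expansion produces a remainder $\tfrac{1}{2}W''(\xi)\psi_k^2$ for some $\xi$ between $f_{k-1}$ and $f_k$. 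Collecting the linear-in-$\psi_k$ contributions and invoking~\eqref{2.1} to identify $-\Delta f_{k-1} + \lambda e^{f_{k-1}}(e^{af_{k-1}}-1) + g = \Delta\psi_k - K\psi_k$, one further integration by parts yields
\[
I_{\bar\Omega}(f_k) - I_{\bar\Omega}(f_{k-1}) = -\tfrac{1}{2}\int_{\bar\Omega}|\nabla\psi_k|^2\,d\mu - K\int_\Omega \psi_k^2\,d\mu + \tfrac{1}{2}\int_\Omega W''(\xi)\psi_k^2\,d\mu.
\]

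The one delicate point, and the main obstacle, is the uniform upper bound on the Taylor remainder. Since $\xi \leq 0$ two cases arise: if $(a+1)e^{a\xi}-1 \leq 0$ then $W''(\xi) \leq 0$, while if $(a+1)e^{a\xi}-1 \geq 0$ then $e^\xi\leq 1$ and $e^{a\xi}\leq 1$ together give $W''(\xi) \leq \lambda((a+1)-1) = \lambda a$. In either case $W''(\xi) \leq \lambda a$, and the assumption $K > a\lambda$ built into the iteration then forces
\[
I_{\bar\Omega}(f_k) - I_{\bar\Omega}(f_{k-1}) \leq -\tfrac{1}{2}\int_{\bar\Omega}|\nabla\psi_k|^2\,d\mu - \Bigl(K - \tfrac{\lambda a}{2}\Bigr)\int_\Omega \psi_k^2\,d\mu \leq 0,
\]
closing the induction. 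Both the sign information $f_{k-1}, f_k \leq 0$ from Lemma~\ref{lemma 2.1} and the quantitative choice $K > a\lambda$ are essential in this step; without either, the quadratic remainder could fail to be absorbed, and the rest of the argument is routine expansion and discrete integration by parts.
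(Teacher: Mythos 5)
Your proof is correct and rests on the same two pillars as the paper's: testing the iteration \eqref{2.1} against $\psi_k=f_k-f_{k-1}$ via discrete integration by parts, and using $K>\lambda a$ together with $f_{k-1},f_k\le 0$ (from Lemma~\ref{lemma 2.1}) to control the second-order variation of the potential --- your bound $W''(\xi)\le\lambda a$ for $\xi\le 0$ is exactly the concavity on the negative half-line of the paper's auxiliary function $h(x)=\tfrac{\lambda}{a+1}e^{(a+1)x}-\lambda e^{x}-\tfrac{K}{2}x^2$. The only difference is bookkeeping: you derive an exact identity for $I_{\bar{\Omega}}(f_k)-I_{\bar{\Omega}}(f_{k-1})$ by a second-order Taylor expansion and two integrations by parts, which yields the (slightly sharper) dissipation $I_{\bar{\Omega}}(f_{k-1})-I_{\bar{\Omega}}(f_k)\ge \tfrac{1}{2}\|\nabla\psi_k\|_{L^2(\bar{\Omega})}^2+\bigl(K-\tfrac{\lambda a}{2}\bigr)\|\psi_k\|_{L^2(\Omega)}^2$, whereas the paper reaches the sufficient estimate $I_{\bar{\Omega}}(f_k)+\tfrac{K}{2}\|\psi_k\|_{L^2(\Omega)}^2\le I_{\bar{\Omega}}(f_{k-1})$ via Young's inequality on the cross gradient term and a first-order concavity inequality.
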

\begin{proof}
We first show that the sequence \( \{ I_{\bar{\Omega}}(f_k) \} \) is monotone.
 To begin with, applying Lemma 2.2 from \cite{MR3833747}, we obtain
\begin{equation}\label{2.4}
\int_{\bar{\Omega}}\langle \nabla f_k, \nabla (f_k-f_{k-1})\rangle d\mu=- \int_{\Omega}\Delta f_k(f_k-f_{k-1})d\mu.
\end{equation}
From Eq.~\eqref{2.1}, it follows that
\begin{equation}\label{2.5}
\int_{\Omega}(\Delta-K)f_k(f_k-f_{k-1})d\mu=\int_{\Omega}\left[\lambda e^{f_{k-1}}(e^{af_{k-1}}-1)+g-Kf_{k-1}\right](f_k-f_{k-1})d\mu.
\end{equation}
 Combining equations (\ref{2.4}) and (\ref{2.5}), we arrive at the following result: 
\begin{equation}\label{2.6}
	\begin{aligned}
		&\int_{\bar{\Omega}} |\nabla f_k|^2  d\mu - \int_{\bar{\Omega}} \langle \nabla f_k, \nabla f_{k-1} \rangle  d\mu 
		+ K \int_{\Omega} (f_k - f_{k-1})^2  d\mu \\
		&= - \int_{\Omega} \left[ \lambda e^{f_{k-1}}(e^{a f_{k-1}} - 1) + g \right] (f_k - f_{k-1}) d\mu.
	\end{aligned}
\end{equation}
Note that 
$$\left|\int_{\bar{\Omega}} \langle \nabla f_k, \nabla f_{k-1} \rangle  d\mu \right|\leq \frac{1}{2}\int_{\bar{\Omega}} |\nabla f_k|^2d\mu +\frac{1}{2}\int_{\bar{\Omega}} |\nabla f_{k-1}|^2d\mu.$$
Subsequently, Eq.~\eqref{2.6} implies that
\begin{equation}\label{2.7}
	\begin{aligned}
		\frac{1}{2} \int_{\bar{\Omega}} |\nabla f_k|^2 d\mu 
		&\leq \frac{1}{2} \int_{\bar{\Omega}} |\nabla f_{k-1}|^2 \, d\mu 
		- K \int_{\Omega} (f_k - f_{k-1})^2 d\mu \\
		&\quad - \int_{\Omega} \left[ \lambda e^{f_{k-1}}(e^{a f_{k-1}} - 1) + g \right] (f_k - f_{k-1})  d\mu.
	\end{aligned}
\end{equation}

  Here we construct an auxiliary function:
$$h(x)=\frac{\lambda}{a+1}e^{(a+1)x}-\lambda e^x-\frac{K}{2}x^2.$$
Note that $K>a \lambda$. It is straightforward to verify that $h(x)$ is concave for $x \leq 0$. Hence, we obtain
$$
h\left(f_{k-1}\right)-h\left(f_k\right) \geqslant h^{\prime}\left(f_{k-1}\right)\left(f_{k-1}-f_k\right)=\left[\lambda e^{f_{k-1}}(e^{a f_{k-1}} - 1) -Kf_{k-1}\right]\left(f_{k-1}-f_k\right) .
$$
It follows that
\begin{equation}\label{2.8}
	\begin{aligned}
		\frac{\lambda}{a+1}e^{(a+1)f_k}-\lambda e^{f_k}\leqslant & \frac{\lambda}{a+1}e^{(a+1)f_{k-1}}-\lambda e^{f_{k-1}}+\frac{K}{2}\left(f_k-f_{k-1}\right)^2 \\
		& +\lambda e^{f_{k-1}}(e^{a f_{k-1}} - 1)\left(f_k-f_{k-1}\right) .
	\end{aligned}
\end{equation}
From equations (\ref{2.7}) and (\ref{2.8}), we obtain
$$
I_{\bar{\Omega}}\left(f_k\right) \leqslant I_{\bar{\Omega}}\left(f_k\right)+\frac{K}{2}\left\|f_{k-1}-f_k\right\|_{L^2(\Omega)}^2 \leqslant I_{\bar{\Omega}}\left(f_{k-1}\right) .
$$	
Noting that  $I_{\bar{\Omega}}\left(f_{0}\right)=0$, we finish the proof.
\end{proof}

We next show that \( \{u_k\} \) remains bounded in \( L^2(\Omega) \), where the bound does not depend on the domain \( \Omega \).
We first present a lemma for later use.
\begin{lemma}\label{le2.3}
For \( x \leq 0 \) and \( a > 0 \), there exists a positive constant \( c \) such that
\[
\frac{(e^{(a+1)x} - 1) + (a+1)(1 - e^x)}{a+1} \geq c \left( \frac{|x|}{1 + |x|} \right)^2.
\]
\end{lemma}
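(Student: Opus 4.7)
The plan is to analyze the ratio
$$
\phi(x)=\frac{F(x)}{G(x)},\qquad F(x)=\frac{e^{(a+1)x}-1}{a+1}+(1-e^{x}),\qquad G(x)=\left(\frac{|x|}{1+|x|}\right)^{2},
$$
on $(-\infty,0)$ and show that it extends continuously to a strictly positive function on the compactified interval $[-\infty,0]$. Existence of the constant $c$ will then follow from the extreme value theorem.

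First, I would verify that $F$ is strictly positive on $(-\infty,0)$, so that the ratio is well-defined and positive. A direct computation gives $F(0)=0$ and $F'(x)=e^{(a+1)x}-e^{x}$, which is negative for $x<0$ since $(a+1)x<x<0$ implies $e^{(a+1)x}<e^{x}$. Thus $F$ is strictly decreasing on $(-\infty,0]$ with $F(0)=0$, hence $F(x)>0$ for $x<0$.

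Next, I would determine the boundary behavior of $\phi$. Near $x=0$, a Taylor expansion gives
$$
\frac{e^{(a+1)x}-1}{a+1}=x+\frac{(a+1)x^{2}}{2}+O(x^{3}),\qquad 1-e^{x}=-x-\frac{x^{2}}{2}+O(x^{3}),
$$
so $F(x)=\frac{a}{2}x^{2}+O(x^{3})$, while $G(x)=x^{2}+O(x^{3})$. Consequently $\phi(x)\to a/2>0$ as $x\to 0^{-}$, and $\phi$ extends continuously to $x=0$ with value $a/2$. At the other end, $e^{x}\to 0$ and $e^{(a+1)x}\to 0$ as $x\to-\infty$, so $F(x)\to \frac{-1}{a+1}+1=\frac{a}{a+1}$, whereas $G(x)\to 1$. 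Therefore $\phi(x)\to \frac{a}{a+1}>0$ as $x\to-\infty$.

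Finally, $\phi$ is continuous and strictly positive on $(-\infty,0]$, with strictly positive finite limits at both endpoints. Hence $\phi$ attains a positive infimum $c>0$ on $[-\infty,0]$, which yields $F(x)\geq c\,G(x)$ for every $x\leq 0$, as claimed. The only slightly delicate step is handling the indeterminacy at $x=0$, where both $F$ and $G$ vanish to second order; this is resolved by the Taylor expansion above, which also makes transparent that the sharp constant near the origin depends only on $a$.
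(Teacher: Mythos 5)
Your proof is correct. It follows the same underlying strategy as the paper --- reduce the inequality to the positivity of a continuous function on a compact set and invoke the extreme value theorem, with a Taylor expansion resolving the degenerate point where both sides vanish to second order --- but the decomposition is different. The paper first substitutes $s=e^{-t}\in(0,1]$ to compactify the domain, factors the numerator as $(1-s)^2h(s)$ with $h$ continuous and positive on $[0,1]$, and then separately compares $(1-e^{-t})^2$ with $\left(\tfrac{t}{1+t}\right)^2$ via the elementary inequality $e^{-t}<\tfrac{1}{1+t}$; the constant comes out as $c=m/(a+1)$ where $m=\min_{[0,1]}h$. You instead work directly with the ratio $\phi=F/G$ on the two-point compactification $[-\infty,0]$, computing the boundary values $a/2$ at $0^-$ and $\tfrac{a}{a+1}$ at $-\infty$ (both of which check out), which avoids both the change of variables and the intermediate inequality and is arguably more streamlined; the paper's route, in exchange, isolates an explicit algebraic factorization of the numerator and makes the role of the weight $\left(\tfrac{|x|}{1+|x|}\right)^2$ as a surrogate for $(1-e^{x})^2$ transparent. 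One small point of rigor worth a sentence in a final write-up: justify the extreme value theorem on $[-\infty,0]$ explicitly (e.g., by noting that $\phi$ is continuous and positive on $(-\infty,0)$ with strictly positive limits at both ends, so its infimum cannot be $0$), since $[-\infty,0]$ is not a subset of $\mathbb{R}$ in the usual sense.
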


\begin{proof}

Set \(k=a+1\), \(t=-x\) and \(s=e^{-t}\in(0,1]\).  Since \(e^{-kt}=s^{k}\) and
\(1-e^{-t}=1-s\), we obtain
\[
g(t):=\frac{e^{-kt}-1}{k}+1-e^{-t}
=\frac{s^{k}-1}{k}+1-s
=\frac{s^{k}-ks+(k-1)}{k}.
\]
Define
$
f(s):=s^{k}-ks+k-1,
$ so that 
$g(t)=\frac{f(s)}{k}.
$

Let 
\[
f(s) = (1 - s)^2 h(s),
\]
where \( h(s) = \dfrac{f(s)}{(1 - s)^2} \). For all \( s \in (0,1) \), we have
\[
f'(s) = k(s^{k-1} - 1) < 0,
\]
so \( f \) is strictly decreasing on \( (0,1) \). Since \( f(1) = 0 \), it follows that \( f(s) > 0 \) for all \( s \in (0,1) \). Consequently,
\[
h(s) = \dfrac{f(s)}{(1 - s)^2} > 0 \quad \text{for all } s \in (0,1).
\]

Near \(s=1\), a second--order Taylor expansion yields
	\[
	s^{k}\approx 1-k(1-s)+\frac{k(k-1)}{2}(1-s)^{2}
	\]
	which implies that \[f(s)\approx\frac{k(k-1)}{2}(1-s)^{2}>0.\]

Note that 
\[
h(0)=\frac{f(0)}{1}=k-1>0,\qquad
\lim_{s\to1^{-}}h(s)=\frac{k(k-1)}{2}>0.
\]
Define \( h(1) := \dfrac{k(k-1)}{2} \) so that \( h \) is continuous and strictly positive on \([0,1]\).

It follows from the extreme–value theorem that there exists \( m > 0 \) satisfying
\[
h(s)\ge m \quad\text{for all }s\in[0,1].
\]

Since \(1-s=1-e^{-t}\),  we get
\[
g(t)=\frac{(1-e^{-t})^{2}\,h(s)}{k}.
\]

For \(t>0\), the elementary inequality \(e^{t}>1+t\) implies
\[
e^{-t}<\frac{1}{1+t},
\] which yields \[1-e^{-t}>\frac{t}{1+t}.\]
Consequently,
\[
(1-e^{-t})^{2}>\left(\frac{t}{1+t}\right)^{2}.
\]

Note that 
\[
g(t)=\frac{(1-e^{-t})^{2}\,h(s)}{k}
>\frac{m}{k}\left(\frac{t}{1+t}\right)^{2}
=:c\left(\frac{t}{1+t}\right)^{2},
\qquad c:=\frac{m}{k}>0.
\]
Therefore
\[
\frac{(e^{(a+1)x} - 1) + (a+1)(1 - e^x)}{a+1} \geq c \left( \frac{|x|}{1 + |x|} \right)^2,
\]
establishing the desired lower bound controlled by
\(\left(\tfrac{|x|}{1+|x|}\right)^{2}\).
\end{proof}

Next, we estimate the bound of $\left\|f_k\right\|_{L^2(\Omega)}$ by analyzing the terms appearing in the expression of $I_{\bar{\Omega}}\left(f_k\right)$.
\begin{lemma}\label{le2.4}
Let $\{f_k \}$ be the sequence determined by Lemma {\ref{lemma 2.1}}. As a result, one obtains the estimate
\begin{equation}\label{2.11}
	\|f_k\|_{L^{2}(\Omega)} \leq C^{\prime}\!\left(I_{\bar{\Omega}}(f_k) + C^{\prime\prime}\right) \leq C,
\end{equation}
with constants \( C^{\prime} \), \( C^{\prime\prime} \), and \( C \) depending only on \( \lambda \), \( a \), \( n \), and \( N \).
\end{lemma}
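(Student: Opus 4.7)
The plan is to use Lemma~\ref{le2.3} to extract from $I_{\bar\Omega}(f_k)$ a quantitative bound on $\int_\Omega \bigl(|f_k|/(1+|f_k|)\bigr)^2 \, d\mu$, and then convert that into the desired $L^2$ estimate on $f_k$. First, I would rewrite
\[
I_{\bar\Omega}(f_k) = \tfrac{1}{2}\int_{\bar\Omega}|\nabla f_k|^2\,d\mu + \lambda\int_\Omega \phi(f_k)\,d\mu + \int_\Omega g f_k\,d\mu,
\]
with $\phi(t):=\bigl((e^{(a+1)t}-1)+(a+1)(1-e^t)\bigr)/(a+1)$. Since $f_k\le 0$ by Lemma~\ref{lemma 2.1}, Lemma~\ref{le2.3} applies at every vertex to give $\phi(f_k)\ge c\bigl(|f_k|/(1+|f_k|)\bigr)^2$. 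Dropping the nonnegative gradient term then yields
\[
c\lambda\int_\Omega\Bigl(\tfrac{|f_k|}{1+|f_k|}\Bigr)^{2}d\mu \;\le\; I_{\bar\Omega}(f_k)-\int_\Omega g f_k\,d\mu.
\]

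For the linear term I would use that $g$ is supported at the $M$ singular points, so $-\int_\Omega g f_k\,d\mu=\sum_{j=1}^M 4\pi n_j|f_k(p_j)|$, and Cauchy--Schwarz on this finite sum together with $\sum_j f_k(p_j)^2\le \|f_k\|_{L^2(\Omega)}^2$ yields $-\int_\Omega g f_k\,d\mu \le N\|f_k\|_{L^2(\Omega)}$. To turn the integral estimate on $(|f_k|/(1+|f_k|))^2$ into an $L^2$ bound, I would split $\|f_k\|_{L^2(\Omega)}^2$ according to whether $|f_k|\le 1$ or $|f_k|>1$: on $\{|f_k|\le 1\}$ the elementary inequality $t^2\le 4(|t|/(1+|t|))^2$ gives a direct bound from the integral, while on $\{|f_k|>1\}$ the same ratio exceeds $1/4$ and hence controls the measure of the set.

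Combining these bounds and using Young's inequality to absorb the term $N\|f_k\|_{L^2(\Omega)}$ into a multiple of $\|f_k\|_{L^2(\Omega)}^2$ on the left should yield an estimate of the form
\[
\|f_k\|_{L^2(\Omega)}\le C'\bigl(I_{\bar\Omega}(f_k)+C''\bigr),
\]
with constants depending only on $\lambda,a,n,N$. The uniform bound $\le C$ then follows from Lemma~\ref{Lemma 2.2}, which gives $I_{\bar\Omega}(f_k)\le I_{\bar\Omega}(f_0)=0$.

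I expect the main obstacle to be controlling the piece $\int_{\{|f_k|>1\}}f_k^2$: since $(|f_k|/(1+|f_k|))^2\to 1$ as $|f_k|\to\infty$, the integral inequality alone does not pin down the pointwise size of $f_k$ on this set, so I will need to combine the measure estimate with a pointwise bound on $|f_k|$ there. This pointwise control is likely to come either from the monotonicity of $\{f_k\}$ inherited from Lemma~\ref{lemma 2.1} together with the iteration structure in~\eqref{2.1}, or from the gradient term in $I_{\bar\Omega}(f_k)$ via a discrete Sobolev-type embedding on $\mathbb{Z}^n$.
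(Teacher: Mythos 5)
Your skeleton is right and you have correctly located the crux, but the crux is exactly the content of this lemma and your proposal does not close it. The measure bound you get on $\{|f_k|>1\}$ from $\bigl(|f_k|/(1+|f_k|)\bigr)^2\ge 1/4$ controls only $\mu\bigl(\{|f_k|>1\}\bigr)$, not $\int_{\{|f_k|>1\}}f_k^2\,d\mu$, and neither of your two fallback suggestions works as stated. Monotonicity from Lemma~\ref{lemma 2.1} gives only the upper bound $f_k\le 0$; a lower pointwise bound uniform in $k$ and in $\Omega$ is precisely what is being sought, so it cannot be assumed. And on $\mathbb{Z}^n$ with $n\ge 2$ there is no embedding of the form $\|u\|_{L^\infty}\lesssim |u|_{1,2}$, so the gradient term does not yield pointwise control of $|f_k|$ on the bad set.

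What the paper does instead is quantitative interpolation rather than pointwise control. It invokes the discrete Gagliardo--Nirenberg/Sobolev inequality $\|u\|_{L^4(V)}^2\le C(n)\|u\|_{L^2(V)}\,|u|_{1,2}$ applied to the zero-extension of $f_k$, giving $\|f_k\|_{L^4(\Omega)}^4\le C_1\|f_k\|_{L^2(\Omega)}^2\|\nabla f_k\|_{L^2(\bar\Omega)}^2$. It then writes $f_k^2=\frac{|f_k|}{1+|f_k|}\cdot(1+|f_k|)|f_k|$, applies Cauchy--Schwarz to get
\[
\Bigl(\int_\Omega f_k^2\,d\mu\Bigr)^2\le \int_\Omega\Bigl(\tfrac{|f_k|}{1+|f_k|}\Bigr)^2 d\mu\cdot\int_\Omega\bigl(1+|f_k|\bigr)^2 f_k^2\,d\mu,
\]
bounds the second factor by $2\bigl(\|f_k\|_{L^2}^2+\|f_k\|_{L^4}^4\bigr)$, uses the $L^4$ bound, and after Young's inequality arrives at $\|f_k\|_{L^2(\Omega)}\le C_5\bigl[1+\int_\Omega(\tfrac{|f_k|}{1+|f_k|})^2 d\mu+\|\nabla f_k\|_{L^2(\bar\Omega)}^2\bigr]$. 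This is then combined with a lower bound on $I_{\bar\Omega}(f_k)$ in which the gradient term is \emph{retained} (only $\tfrac14\|\nabla f_k\|^2$ is sacrificed to absorb the $g$-term), so that both $\int(\tfrac{|f_k|}{1+|f_k|})^2$ and $\|\nabla f_k\|^2$ on the right are controlled by $I_{\bar\Omega}(f_k)+C''$. Your decision to drop the gradient term at the outset therefore forecloses the route that actually works; to complete your argument you would need to reinstate it and carry out an interpolation of this type (your own Hölder idea, $\int_{\{|f_k|>1\}}f_k^2\le \mu(\{|f_k|>1\})^{1/2}\|f_k\|_{L^4}^2$ combined with the $L^4$ inequality, would lead to essentially the same computation). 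Your simpler treatment of the source term, $-\int_\Omega g f_k\,d\mu\le N\|f_k\|_{L^2(\Omega)}$, is fine and marginally cleaner than the paper's $L^{4/3}$--$L^4$ duality, but it is not where the difficulty lies.
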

\begin{proof}

We begin by introducing an inequality.  For $u\in L^{p}(V)\, (p>1)$, define 
$$
|u|_{1, p}:=\left(\sum_{x \in V} \sum_{y \sim x}|u(y)-u(x)|^p\right)^{\frac{1}{p}}.
$$In the proof of Theorem 4.1 in \cite{MR4095474}, Porretta established the following estimate:

\begin{equation}\label{2.12}
\|u\|_{L^{\frac{\gamma n}{n-1}}(V)}^{\gamma} \leq C(p,n,\gamma)\|u\|_{L^{(\gamma-1)p^{\prime}}(V)}|u|_{1, p},
\end{equation}
where \( p>1 \), \( \gamma \geqslant p \), \( p^{\prime} = \frac{p}{p-1} \), and \( c(p,n,\gamma) > 0 \) is a constant that depends solely on \( p \), \( \gamma \), and \( n \).
By setting \( p = \gamma = 2 \) in~\eqref{2.12}, we derive
\begin{equation}\label{2.13}	
	\|u\|_{L^4(V)}^2 \leq \|u\|_{L^{\frac{2n}{n-1}}(V)}^{2} \leq C(n)\|u\|_{L^2(V)}|u|_{1,2},
\end{equation}
where \( C(n) \) denotes a constant depending only on \( n \).
Here the inequality \( \|u\|_{L^4(V)} \leq \|u\|_{L^{\frac{2n}{n-1}}(V)} \) has been applied (see Lemma~2.1 in~\cite{MR3277179}).

Now we extend $f_k(x)$ to the entire space $V$:
\begin{equation*}
	\tilde{f}_k(x) = \left\{
	\begin{array}{l@{\quad\text{on}\ }l}
		f_k(x) & \Omega, \\
		0 & V \backslash \Omega.
	\end{array}
	\right.
\end{equation*}
Note that $f_k=0$ on $\partial \Omega$, we get 
\begin{equation}\label{2.14}
|\tilde{f}_k|_{1,2} \leqslant \left(2 \int_{V}|\nabla \tilde{f_k}|^2 d\mu\right)^{\frac{1}{2}} =\left(2 \int_{\bar{\Omega}}|\nabla f_k|^2 d\mu\right)^{\frac{1}{2}}.
\end{equation}
It follows from (\ref{2.13}) and (\ref{2.14}) that 

\[\|\tilde{f}_k\|_{L^4(\Omega)}^4\leq C_1\|\tilde{f}_k\|_{L^2(\Omega)}^2 \|\nabla f_k\|^2_{L^2(\bar{\Omega})},\]
where $C_1=2C^2(n)$. Hence \begin{equation}\label{2.15}
\|f_k\|_{L^4(\Omega)}^4\leq C_1\|f_k\|_{L^2(\Omega)}^2 \|\nabla f_k\|^2_{L^2(\bar{\Omega})}.\end{equation}

Using Lemma {\ref{le2.3}}  and (\ref{2.15}), we conclude that 
\begin{equation}\label{2.16}
	\begin{aligned}
I_{\bar{\Omega}}(f_k)&\geqslant\frac{1}{2} \|\nabla f_k\|^2_{L^2(\bar{\Omega})}+\lambda c\int_{\Omega}\left(\frac{|f_k|}{1+|f_k|}\right)^2d\mu+\int_{\Omega}gf_kd\mu\\
	& \geqslant\frac{1}{2} \|\nabla f_k\|^2_{L^2(\bar{\Omega})}+\lambda c\int_{\Omega}\left(\frac{|f_k|}{1+|f_k|}\right)^2d\mu-\|g\|_{L^{\frac{4}{3}}(\Omega)} \|f_k\|_{L^4(\Omega)}\\
	& \geqslant\frac{1}{2} \|\nabla f_k\|^2_{L^2(\bar{\Omega})}+\lambda c\int_{\Omega}\left(\frac{|f_k|}{1+|f_k|}\right)^2d\mu -C_2\|f_k\|_{L^2(\Omega)}^{\frac{1}{2}} \|\nabla f_k\|^{\frac{1}{2}} _{L^2(\bar{\Omega})}\\
	&\geqslant\frac{1}{2} \|\nabla f_k\|^2_{L^2(\bar{\Omega})}+\lambda c\int_{\Omega}\left(\frac{|f_k|}{1+|f_k|}\right)^2d\mu -\epsilon \|f_k\|_{L^2(\Omega)}-\frac{C_2^2}{4\epsilon }\|\nabla f_k\|_{L^2(\bar{\Omega})}\\
	&\geqslant\frac{1}{2} \|\nabla f_k\|^2_{L^2(\bar{\Omega})}+\lambda c\int_{\Omega}\left(\frac{|f_k|}{1+|f_k|}\right)^2d\mu -\epsilon \|f_k\|_{L^2(\Omega)}-\frac{1}{4}\|\nabla f_k\|^2_{L^2(\bar{\Omega})}-C_3\\
	&=\frac{1}{4} \|\nabla f_k\|^2_{L^2(\bar{\Omega})}+\lambda c\int_{\Omega}\left(\frac{|f_k|}{1+|f_k|}\right)^2d\mu -\epsilon \|f_k\|_{L^2(\Omega)}-C_3,
\end{aligned}
\end{equation}
where $C_2$ only depending on $n$ and $N$, $C_3=\frac{C_2^4}{16\epsilon^2}$.

Applying (\ref{2.15}), we arrive at the following estimate:
\begin{align*}
	&\left(\int_{\Omega}f_k^2d\mu\right)^2=\left[\int_{\Omega}\frac{|f_k|}{1+|f_k|}(1+|f_k|)|f_k|d\mu\right]^2\\
	\leqslant & \int_{\Omega}\left(\frac{|f_k|}{1+|f_k|}\right)^2d\mu\int_{\Omega}
	(1+|f_k|)^2|f_k|^2d\mu\\
	\leqslant & 2\int_{\Omega}\left(\frac{|f_k|}{1+|f_k|}\right)^2d\mu\int_{\Omega}
	(|f_k|^2+|f_k|^4)d\mu\\
	\leqslant &2\|f_k\|_{L^2(\Omega)}^2\int_{\Omega}\left(\frac{|f_k|}{1+|f_k|}\right)^2d\mu+2C_1\|f_k\|_{L^2(\Omega)}^2 \|\nabla f_k\|^2_{L^2(\bar{\Omega})}\int_{\Omega}\left(\frac{|f_k|}{1+|f_k|}\right)^2d\mu \\
	\leqslant &\frac{1}{4}\|f_k\|_{L^2(\Omega)}^4+4\left[\int_{\Omega}\left(\frac{|f_k|}{1+|f_k|}\right)^2d\mu \right]^2+\frac{1}{4}\|f_k\|_{L^2(\Omega)}^4+4C_1^2\|\nabla f_k\|^4_{L^2(\bar{\Omega})}\left[\int_{\Omega}\left(\frac{|f_k|}{1+|f_k|}\right)^2d\mu \right]^2\\
	=&\frac{1}{2}\|f_k\|_{L^2(\Omega)}^4+4\left[\int_{\Omega}\left(\frac{|f_k|}{1+|f_k|}\right)^2d\mu \right]^2+4C_1^2\|\nabla f_k\|^4_{L^2(\bar{\Omega})}\left[\int_{\Omega}\left(\frac{|f_k|}{1+|f_k|}\right)^2d\mu \right]^2\\
	\leqslant &\frac{1}{2}\|f_k\|_{L^2(\Omega)}^4+C_4 \left\{ \left[\int_{\Omega}\left(\frac{|f_k|}{1+|f_k|}\right)^2d\mu \right]^2+\|\nabla f_k\|^4_{L^2(\bar{\Omega})}\left[\int_{\Omega}\left(\frac{|f_k|}{1+|f_k|}\right)^2d\mu \right]^2\right\}\\
	\leqslant & \frac{1}{2}\|f_k\|_{L^2(\Omega)}^4+C_4 \left\{1+ \left[\int_{\Omega}\left(\frac{|f_k|}{1+|f_k|}\right)^2d\mu \right]^4+\|\nabla f_k\|^8_{L^2(\bar{\Omega})}\right\}\\
	\leqslant & \frac{1}{2}\|f_k\|_{L^2(\Omega)}^4+C_4 \left\{1+ \left[\int_{\Omega}\left(\frac{|f_k|}{1+|f_k|}\right)^2d\mu \right]+\|\nabla f_k\|_{L^2(\bar{\Omega})}^2\right\}^4,
\end{align*}
where $C_4=4(1+C_1^2)$.  Hence, we have 
\begin{equation}\label{2.17}
\|f_k\|_{L^2(\Omega)}\leq C_5\left[ 1+\int_{\Omega}\left(\frac{|f_k|}{1+|f_k|}\right)^2d\mu+\|\nabla f_k\|^2_{L^2(\bar{\Omega})}\right],\end{equation}
where $C_5$ is only depending on $n$. 

By choosing
$$
\epsilon=\frac{\min \left\{\frac{1}{8}, \frac{\lambda c}{2}\right\}}{C_5},
$$
and applying estimates ( \ref{2.16}) and (\ref{2.17} ), we obtain
$$
\left\|f_k\right\|_{L^2(\Omega)} \leq C^{\prime}\left(I_{\bar{\Omega}}\left(f_k\right)+C^{\prime\prime}\right)
$$
In view of Lemma \ref{Lemma 2.2}, it follows that
$$
\left\|f_k\right\|_{L^2(\Omega)} \leq C.
$$
\end{proof}

Applying Lemma~\ref{le2.4}, we establish the solvability of the boundary value problem associated with Eq.~\eqref{1.2} on a bounded domain.

\begin{lemma}\label{le2.5}
Consider the following boundary value problem:
\begin{equation}\label{2.18}
	\begin{cases}\Delta f = \lambda e^f (e^{af} - 1) + 4\pi \sum\limits_{j=1}^M n_j \delta_{p_j}, & \text{in } \Omega, \\
		f(x) = 0, & \text{on } \partial \Omega.\end{cases}
	\end{equation}
	There exists a maximal solution \( f_{\Omega} \) such that, for any other solution \( \tilde{f}_{\Omega} \) of Eq.~\eqref{2.18}, one has \( \tilde{f}_{\Omega} \leq f_{\Omega} \) pointwise in \( \Omega \).
	
\end{lemma}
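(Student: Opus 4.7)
The plan is to combine Lemmas~\ref{lemma 2.1} and~\ref{le2.4} to extract a limit of the iterates $\{f_k\}$, identify this limit as a solution of Eq.~\eqref{2.18}, and then verify maximality by a maximum-principle comparison. Since $\Omega$ is finite, the space $C_0(\bar{\Omega})$ is finite-dimensional, so the uniform $L^2(\Omega)$ bound from Lemma~\ref{le2.4} amounts to a uniform pointwise bound. Combined with the monotonicity $0 = f_0 \geq f_1 \geq f_2 \geq \cdots$ from Lemma~\ref{lemma 2.1}, this guarantees that $f_k(x)$ converges pointwise on $\bar{\Omega}$ to a function $f_\Omega \in C_0(\bar{\Omega})$. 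Because every operation appearing in the iteration scheme~\eqref{2.1} depends continuously on the pointwise values of $f_{k-1}$, passing to the limit yields $(\Delta - K)f_\Omega = \lambda e^{f_\Omega}(e^{a f_\Omega}-1) + g - K f_\Omega$ in $\Omega$, with $f_\Omega = 0$ on $\partial \Omega$. Cancelling $-K f_\Omega$ from both sides recovers Eq.~\eqref{2.18}, so $f_\Omega$ is a solution.

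For maximality, let $\tilde f_\Omega$ be an arbitrary solution of Eq.~\eqref{2.18}. The strategy is to prove by induction that $\tilde f_\Omega \leq f_k$ on $\bar{\Omega}$ for every $k \geq 0$, after which the limit gives $\tilde f_\Omega \leq f_\Omega$. The base case $\tilde f_\Omega \leq f_0 = 0$ follows from the maximum principle applied to Eq.~\eqref{2.18}: at an interior positive maximum one would have $\Delta \tilde f_\Omega \leq 0$ while the right-hand side $\lambda e^{\tilde f_\Omega}(e^{a\tilde f_\Omega}-1) + g$ is strictly positive there, contradicting the equation, and the boundary values already vanish. For the inductive step, assume $\tilde f_\Omega \leq f_{k-1} \leq 0$. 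Subtracting the equation satisfied by $\tilde f_\Omega$ from~\eqref{2.1} and applying the mean value theorem to $\phi(t) := \lambda e^{t}(e^{at}-1)$ gives
\[
(\Delta - K)(\tilde f_\Omega - f_k) = [\phi'(\xi) - K]\,(\tilde f_\Omega - f_{k-1}),
\]
for some $\xi$ lying between $\tilde f_\Omega$ and $f_{k-1}$, hence $\xi \leq 0$. Since $\phi'(\xi) = \lambda e^{\xi}[(a+1)e^{a\xi} - 1] \leq \lambda a$ for every $\xi \leq 0$, the standing assumption $K > a\lambda$ ensures $\phi'(\xi) - K < 0$. Combined with $\tilde f_\Omega - f_{k-1} \leq 0$, the right-hand side is nonnegative in $\Omega$, and since $\tilde f_\Omega - f_k$ vanishes on $\partial \Omega$, the maximum principle for $\Delta - K$ forces $\tilde f_\Omega \leq f_k$.

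The main obstacle I foresee is precisely the sign bookkeeping in this induction: the comparison coefficient $\phi'(\xi) - K$ must be kept strictly negative throughout $\xi \leq 0$, which is exactly why the iteration parameter was chosen to satisfy $K > a\lambda$ from the outset. Once this is pinned down, the remainder of the argument is a routine combination of monotone pointwise convergence and the maximum principle for $\Delta - K$ already used in the proof of Lemma~\ref{lemma 2.1}.
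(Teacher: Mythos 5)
Your proposal is correct and follows essentially the same route as the paper: extract the limit $f_\Omega$ of the monotone, uniformly bounded iterates via finite-dimensionality, pass to the limit in the scheme~\eqref{2.1}, and prove maximality by first showing $\tilde f_\Omega \le 0$ at an interior positive maximum and then running the same mean-value-theorem induction with the comparison coefficient $\phi'(\xi) - K \le \lambda a - K < 0$ and the maximum principle for $\Delta - K$. The only differences are cosmetic (index shift in the induction and a slightly more explicit justification of the pointwise convergence).
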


\begin{proof}
	
From Lemmas \ref{lemma 2.1} and \ref{le2.4}, and the fact that $L^2(\Omega)$ is finite-dimensional, it follows that

$$
f_k \rightarrow f_{\Omega} \quad \text {in  }  L^{2}(\Omega),
$$
and
\begin{equation}\label{2.19}
\left\|f_{\Omega}\right\|_{L^{2}(\Omega)} \leq C.
\end{equation}
Since \( f_k \) converges pointwise to \( f_{\Omega} \), it follows that \( f_{\Omega} \) satisfies Eq.~\eqref{2.18}.

Suppose now that \( \tilde{f}_{\Omega} \) is another solution to Eq.~\eqref{2.18}. Assume that there exists a point \( x_0 \in \Omega \) such that
\[
\tilde{f}_{\Omega}(x_0) = \sup_{x \in \Omega} \tilde{f}_{\Omega}(x) > 0.
\]
Then we obtain
\[
0 \geqslant \Delta \tilde{f}_{\Omega}(x_0) \geqslant \lambda e^{\tilde{f}_{\Omega}} \left( e^{a\tilde{f}_{\Omega}} - 1 \right) > 0,
\]
which leads to a contradiction. Therefore, \( \tilde{f}_{\Omega}(x) \leq 0 = f_0 \) holds for all \( x \in \Omega \).

Next, assume that \( \tilde{f}_{\Omega} \leq f_k \). Then we compute
\[
\begin{aligned}
	(\Delta - K)(\tilde{f}_{\Omega} - f_{k+1}) &= \lambda e^{\tilde{f}_{\Omega}}(e^{a \tilde{f}_{\Omega}} - 1) - \lambda e^{f_k}(e^{a f_k} - 1) - K(\tilde{f}_{\Omega} - f_k) \\
	&\geqslant \lambda e^{\xi}[(a+1)e^{a\xi} - 1](\tilde{f}_{\Omega} - f_k) - K(\tilde{f}_{\Omega} - f_k) \\
	&\geqslant (\lambda a - K)(\tilde{f}_{\Omega} - f_k) \\
	&\geqslant 0,
\end{aligned}
\]
where \( \xi \) lies between \( \tilde{f}_{\Omega} \) and \( f_k \).

By the maximum principle (Lemma 2.2 in~\cite{hua2023existence}), we conclude that
\[
\tilde{f}_{\Omega} \leq f_{k+1}.
\]
Using mathematical induction,  we deduce that
\[
\tilde{f}_{\Omega} \leq f_{k}\quad \text{for all }  k,
\]
which implies
\[
\tilde{f}_{\Omega} \leq f_{\Omega}.
\]
Therefore, \( f_{\Omega} \) is the maximal solution.

\end{proof}

Based on the preceding results, we are in a position to prove Theorem~\ref{th1}. 
To this end, consider an increasing family of bounded and connected domains \( \{\Omega_n\} \) satisfying

\[
\Omega_0 \subset \Omega_n \subset \Omega_{n+1} \quad \text{for all } n \in \mathbb{N},
\]
and
\[
\bigcup_{n=1}^{\infty} \Omega_n = V.
\]
This exhaustion allows us to construct solutions on expanding domains and then pass to the limit as \( n \to \infty \). Let \( f^{(n)} \) denote the maximal solution of the following equation:
\begin{equation*}
	\begin{cases}\Delta f = \lambda e^f (e^{af} - 1) + 4\pi \sum\limits _{j=1}^M n_j \delta_{p_j}, & \text{in } \Omega_n, \\
		f(x) = 0, & \text{on } \partial \Omega_n.\end{cases}
\end{equation*}
Note that \( f^{(n+1)} \leq 0 \) on \( \bar{\Omega}_n \). Following the proof of Lemma~\ref{le2.5}, we obtain that
\( f^{(n+1)} \leq f^{(n)} \) on \( \bar{\Omega}_n \). Define the extension of \( f^{(n)} \) to the entire domain \( V \) as
\begin{equation*}
	\tilde{f}^{(n)}(x) = 
	\begin{cases}
		f^{(n)}(x), & \text{if } x \in \Omega_n, \\
		0, & \text{if } x \in V \setminus \Omega_n.
	\end{cases}
\end{equation*}
Then we conclude that
\[
0 \geq \tilde{f}^{(1)} \geq \tilde{f}^{(2)} \geq \cdots \geq \tilde{f}^{(n)} \geq \cdots.
\]
It follows from~(\ref{2.19}) that
\[
\| \tilde{f}^{(n)}\|_{L^{2}(V)} \leq C.
\]

Hence, we conclude that, up to a subsequence, \( \tilde{f}^{(n)} \) converges to a limit \( \tilde{f} \),
 which satisfies
\[
\Delta f = \lambda e^f (e^{a f} - 1) + 4\pi \sum_{j=1}^M n_j \delta_{p_j} \quad \text{in } V,
\]
with the condition
\[
\lim_{d(x) \to +\infty} \tilde{f}(x) = 0.
\]

We proceed to establish that \( \tilde{f} \) is indeed the maximal solution.
 Suppose \( \hat{f} \) is another topological solution to Eq.~\eqref{1.2}. Suppose there is a point \( x \in V \) with \( \hat{f}(x) > 0 \).
  Then there exists a domain \( \Omega_{n_0} \) and a point \( \bar{x} \in \Omega_{n_0} \) such that
\[
\hat{f}(\bar{x}) = \sup_{x \in \Omega_{n_0}} \hat{f}(x) > 0.
\]
At the point \( \bar{x} \) where \( \hat{f} \) attains its maximum, one obtains
\[
0 \geq \Delta \hat{f}(\bar{x}) = \lambda e^{\hat{f}(\bar{x})}\!\left(e^{a\hat{f}(\bar{x})} - 1\right) + 4\pi \sum_{j=1}^M n_j \delta_{p_j} > 0,
\]
a contradiction. Consequently, \( \hat{f}(x) \le 0 \) for every \( x \in V \).

Proceeding as in the proof of Lemma~\ref{le2.5}, we arrive at 
\[
\hat{f}(x) \leq f^{(n)}(x) \quad \text{in } \Omega_n, \quad \text{for all } n \in \mathbb{N}.
\]
Taking the limit, we deduce that
\[
\hat{f}(x) \leq \tilde{f}(x) \quad \text{for all } x \in V.
\]
Therefore, \( \tilde{f} \) is the maximal topological solution.

We now turn to the analysis of the decay behavior of $\tilde{f}$.  Note that
\[
\lim_{d(x) \to +\infty} \tilde{f}(x) = 0.
\]
Let \( \varepsilon\in(0,1) \) be fixed. Then for sufficiently large \( R>1 \),
\[
\tilde{\Omega}:=\{x\in V: d(x)\ge R\}\subset \Omega_0^{c},
\]
and, throughout \( \tilde{\Omega} \),
\[
\lambda a e^{(a+1)\tilde{f}(x)} \ge 2n\left[\!\left(1+\frac{\lambda a}{2n}\right)^{1-\varepsilon}-1\!\right].
\]

Hence, on $\tilde{\Omega}$, there holds 
$$
\Delta \tilde{f}=\lambda e^{\tilde{f}}\left(e^{a\tilde{f}}-1\right)=\lambda ae^{\tilde{f}+a\xi} \tilde{f} \leqslant \lambda ae^{(a+1) \tilde{f}} \tilde{f}\leqslant c_1 \tilde{f},
$$
where $\xi$ lies between $\tilde{f}$ and $0$, $c_1= 2 n\left[\left(1+\frac{\lambda a}{2 n}\right)^{1-\epsilon}-1\right]$.

For any \( \varepsilon \in (0,1) \), let
\[
v(x) := -e^{-\alpha(1 - \varepsilon)d(x)}
\]
be the auxiliary function,
where \( \alpha = \ln\left(1 + \frac{\lambda a}{2n} \right) \) and \( 0 < \varepsilon < 1 \). Let \( e_i \in \mathbb{R}^n \) denote the \( i \)-th canonical basis vector, that is,
\[
e_i = (0,\, 0,\, \dots,\, 1,\, \dots,\, 0)^\top.
\]

On the domain \( \tilde{\Omega} \), letting \( s = d(x) \), the discrete Laplacian of \( v \) is given by
\[
\Delta v(x) = \sum_{y \sim x} \big( v(y) - v(x) \big) = \sum_{i=1}^n \left( v(x + e_i) + v(x - e_i) - 2v(x) \right).
\]

If \( x_i \neq 0 \), then
\[
v(x + e_i) + v(x - e_i) - 2v(x) = -e^{-\alpha(1 - \varepsilon)(s - 1)} - e^{-\alpha(1 - \varepsilon)(s + 1)} + 2e^{-\alpha(1 - \varepsilon)s}.
\]

If \( x_i = 0 \), we have
\[
\begin{aligned}
	v(x + e_i) + v(x - e_i) - 2v(x) &= -2e^{-\alpha(1 - \varepsilon)(s + 1)} + 2e^{-\alpha(1 - \varepsilon)s} \\
	&\geq -e^{-\alpha(1 - \varepsilon)(s - 1)} - e^{-\alpha(1 - \varepsilon)(s + 1)} + 2e^{-\alpha(1 - \varepsilon)s}.
\end{aligned}
\]

To estimate $\Delta v(x)$, we observe that:
$$
\begin{aligned}
	\Delta v(x) & \geqslant n\left[-e^{-\alpha(1-\varepsilon)(s-1)}-e^{-\alpha(1-\varepsilon)(s+1)}+2 e^{-\alpha(1-\varepsilon) s}\right] \\
	& =n\left[e^{-\alpha(1-\varepsilon)}+e^{\alpha(1-\varepsilon)}-2\right] v(x) \\
	& =n\left[\left(1+\frac{c_1}{2 n}\right)+\frac{1}{1+\frac{c_1}{2 n}}-2\right] v(x) \\
	& \geqslant n\left[2\left(1+\frac{c_1}{2 n}\right)-2\right] v(x)\\
	&=c_1 v(x).
\end{aligned}
$$
Denote by \( C(\varepsilon) \) a constant determined only by \( \varepsilon \).  Then, on \( \tilde{\Omega} \), we have
\begin{equation}\label{eq:decay-est}
	\left( \Delta - c_1 \right) \left( C(\varepsilon) v - \tilde{f} \right)
	\geq c_1 C(\varepsilon) v - c_1 C(\varepsilon) - c_1 \tilde{f} + c_1 \tilde{f} 
	= 0.
\end{equation}

Choose \( C(\varepsilon) \) sufficiently large such that
\[
C(\varepsilon) v(x) - \tilde{f}(x) \leq 0 \quad \text{for all } x \text{ with } d(x) = R.
\]
Note that
\[
\lim_{d(x) \to \infty} \left( C(\varepsilon) v(x) - \tilde{f}(x) \right) = 0.
\]

Assume, for contradiction, that there exists a point \( x \in \tilde{\Omega} \) with \( (C(\varepsilon)v - \tilde{f})(x) > 0 \).  
Then one can find a bounded domain
\[
\hat{\Omega} = \{ x \in V \mid R \le d(x) \le R_0 \}
\]
for some constant \( R_0 > R \), and a point \( x_0 \in \hat{\Omega} \) where
\[
(C(\varepsilon)v - \tilde{f})(x_0) = \sup_{x \in \hat{\Omega}} \big(C(\varepsilon)v(x) - \tilde{f}(x)\big) > 0.
\]

However, evaluating both sides of \eqref{eq:decay-est} at \( x_0 \), we obtain
\[
\Delta \left( C(\varepsilon) v - \tilde{f} \right)(x_0) \geq c_1 \left( C(\varepsilon) v - \tilde{f} \right)(x_0) > 0,
\]
which contradicts the maximality of \( x_0 \), since the discrete Laplacian at a maximum point should satisfy
\[
\Delta \left(C(\varepsilon)v - \tilde{f}\right)(x_0) \le 0.
\]
This contradiction implies that the function \( C(\varepsilon)v - \tilde{f} \) cannot attain a positive maximum in \( \hat{\Omega} \), and hence must be non-positive throughout $\tilde{\Omega}$.

Therefore, we conclude that
\[
0 \geq \tilde{f}(x) \geq - C(\varepsilon) e^{ -\alpha(1 - \varepsilon) d(x) },
\]
and hence \( \tilde{f} \) belongs to \( L^p(V) \) for all \( p \ge 1 \).
\section{Proof of Theorem 1.2}

First, we denote by $f_{\lambda}$ the topological solution obtained in Theorem~\ref{th1}. Note that \[
f_{\lambda}(x) = O\left(e^{-\alpha(1 - \varepsilon) d(x)}\right).
\]  By applying Green's formula, we obtain
\[\lambda \sum_{x \in \mathbb{Z}^n} e^{f_\lambda}\left(1-e^{af_\lambda}\right)=\sum_{x \in \mathbb{Z}^n} g(x)=N.
\]

Thus we get
\begin{equation}\label{3.1}
e^{f_\lambda}\bigl(1-e^{a f_\lambda}\bigr)\le \frac{N}{\lambda}.
\end{equation}

Let \( t = e^{f_\lambda} \). Since \( f_\lambda \le 0 \), we have \( 0 < t \le 1 \).  
Then (\ref{3.1}) then becomes
\[
t(1-t^a)\le \frac{N}{\lambda}.
\]

Define
\[
\eta(t)=t(1-t^a)=t-t^{a+1}, \qquad t\in(0,1].
\]
Then
\[
\eta'(t)=1-(a+1)t^{a}.
\]
Setting \( \eta '(t)=0 \) yields the critical point
\[
t_0=(a+1)^{-1/a}.
\]

Since \( \eta'(t) > 0 \) for \( 0<t<t_0 \) and \( \eta'(t)<0 \) for \( t_0<t<1 \),  
the function \( \eta(t) \) is increasing on \( (0,t_0] \) and decreasing on \( [t_0,1) \).  
Hence \( \eta(t) \) attains its maximum at \( t=t_0 \), and
\[
\eta(t_0)
= t_0 - t_0^{a+1}
= (a+1)^{-1/a} - (a+1)^{-(a+1)/a}.
\]
Denote this maximum value by \( \eta_0 \).

If
\[
\frac{N}{\lambda} < \eta_0,
\]
then the equation
\[
t(1-t^{a})=\frac{N}{\lambda}
\]
has exactly two solutions \( t_1, t_2 \) satisfying
\[
0<t_1<t_0<t_2<1.
\]
From the inequality
\[
t(1-t^{a}) \le \frac{N}{\lambda},
\]
we know that, when \(\lambda\) is sufficiently large, the values of \(t=e^{f_\lambda(x)}\)
must satisfy either
\[
t \le t_1 \qquad\text{or}\qquad t \ge t_2.
\]

We claim that, for all sufficiently large $\lambda$, the only admissible branch is
\[
t(x)\ge t_2 .
\]
Equivalently, $f_\lambda(x)\ge \ln t_2$. 
Suppose to the contrary that there exists some $x\in\mathbb{Z}^n$ such that
\[
f_\lambda(x)\le \ln t_1 .
\]
Since \(\lim_{d(x)\to\infty} f_\lambda(x)=0\) and the lattice \(\mathbb{Z}^n\) is connected, we may find two neighboring points \(x_1\sim x_2\) such that
\[
f_\lambda(x_1) \ge \ln t_2, \qquad 
f_\lambda(x_2) \le \ln t_1 .
\]

Using the definition of the  discrete Laplacian, we have 
\[
\Delta f_\lambda(x_1)
\le f_\lambda(x_2) - 2n\, f_\lambda(x_1).
\]
Note that 
\[
\Delta f_\lambda(x)
= g(x) + \lambda e^{f_\lambda(x)}\bigl(e^{a f_\lambda(x)}-1\bigr),
\]
and 
\[
\bigl|\Delta f_\lambda(x_1)\bigr|
\le |g(x_1)| + \lambda e^{f_\lambda(x_1)}\bigl|e^{a f_\lambda(x_1)}-1\bigr|
\le N + N = 2N.
\]

We conclude that 
\[
2n\, f_\lambda(x_1)
\le f_\lambda(x_2) + 2N.
\]
Substituting \(f_\lambda(x_1)\ge \ln t_2\) and \(f_\lambda(x_2)\le \ln t_1\), we obtain
\begin{equation}\label{lni}
2n \ln t_2 \le \ln t_1 + 2N.
\end{equation}
Noting that 
\[
\lim_{\lambda\to\infty}\ln t_2 = 0
\quad\text{and}\quad
\lim_{\lambda\to\infty}\ln t_1 = -\infty,
\]
we see that the left-hand side \(2n\ln t_2\) tends to \(0\), while the right-hand side 
\(\ln t_1 + 2N\) tends to \(-\infty\) as $\lambda\rightarrow \infty$.
Therefore, there exists a constant \(\lambda_0>0\) such that, for all 
\(\lambda>\lambda_0\),
\[
2n \ln t_2 > \ln t_1 + 2N.
\]
This contradicts the inequality \eqref{lni}, so the branch \(t\le t_1\) is impossible when \(\lambda>\lambda_0\). Hence we conclude that
\[
f_\lambda(x) \ge \ln t_2, \qquad \forall x\in\mathbb{Z}^n, \quad \text{if } \lambda>\lambda_0.
\]

For $0 < \lambda_1 < \lambda_2$, we have
\[
\Delta f_{\lambda_1} 
= \lambda_1 e^{f_{\lambda_1}}\left(e^{a f_{\lambda_1}} - 1\right)+g
\geq \lambda_2 e^{f_{\lambda_1}}\left(e^{a f_{\lambda_1}} - 1\right)+g.
\]
Let $\{f_{\lambda_2}^k\}$ be the sequence generated by the iteration scheme \eqref{2.1} with $\lambda=\lambda_2$ on $\Omega$. 
Arguing as in the proof of Lemma~\ref{le2.5}, we obtain
\[
f_{\lambda_1}\le f_{\lambda_2}^k \qquad \text{for all } k\ge 0.
\]
It follows that 
\[
f_{\lambda_1}\le f_{\lambda_2}.
\]

In the following,  we discuss two separate cases: $n = 2$ and $n \geq 3$.

In the case $n = 2$, we assume that there exists a function $f$ such that 
$$f_{\lambda}(x) \rightarrow f(x)\ \text{as}\ \lambda \rightarrow 0_{+}.$$
Then, we get 
$$
\begin{cases}\Delta f(x)=g(x) & \forall x \in \mathbb{Z}^2, \\ f(x)<0 & \forall x \in \mathbb{Z}^2 .\end{cases}
$$

Let us introduce, for each $d \in \mathbb{N}_{+}$,
\[
Q_d=\left\{(x_1,x_2)\in\mathbb{Z}^2:\ |x_1|\le d,\ |x_2|\le d \right\}.
\]

Choose $d_0 \in \mathbb{N}_{+}$ such that $\Omega_0 \subset Q_{d_0}$. 
For any $d > d_0$, denote by $u_d$ the unique solution of
\[
\begin{cases}
	\Delta u_d = g & \text{in } Q_d,\\[2mm]
	u_d = 0        & \text{on } \partial Q_d .
\end{cases}
\]
By construction, $u_d$ satisfies
\[
f\le u_d \le 0 \qquad \text{in } Q_d .
\]
It follows that
\begin{equation}\label{ti1}
	\begin{aligned}
		-4\pi\sum_{j=1}^{M} n_j\, u_d(p_j)
		&= -\int_{Q_d} (\Delta u_d)\,u_d
		= \int_{\bar{Q}_d} |\nabla u_d|^2 \, d\mu \\
		&= \frac{1}{2}\sum_{\substack{x,y\in\bar{Q}_d\\ x\sim y}}
		\bigl(u_d(y)-u_d(x)\bigr)^2 \\
		&= \frac{1}{2}\sum_{\substack{x,y\in Q_d\\ x\sim y}}
		\bigl(u_d(y)-u_d(x)\bigr)^2
		+ \sum_{\substack{x\in Q_d,\, y\in\partial Q_d\\ x\sim y}}
		\bigl(u_d(y)-u_d(x)\bigr)^2 \\
		&\ge \sum_{i = d_0}^{d}
		\sum_{\substack{x\in Q_i,\, y\in\partial Q_i\\ x\sim y}}
		\bigl(u_d(y)-u_d(x)\bigr)^2 .
	\end{aligned}
\end{equation}

Note that
\[
\sum_{x \in Q_i} \Delta u_d(x)
= \sum_{x \in Q_i} \sum_{y \sim x} \bigl(u_d(y)-u_d(x)\bigr)
= \sum_{\substack{x\in Q_i,\, y\in\partial Q_i\\ x \sim y}}
\bigl(u_d(y)-u_d(x)\bigr),
\]
since all contributions from interior edges cancel pairwise. 
By the Cauchy inequality, we obtain
\begin{equation}\label{ti2}
	\left(\sum_{x \in Q_i} \Delta u_d(x)\right)^{2}
	\leq  |\partial Q_i|\!
	\sum_{\substack{x\in Q_i,\, y\in\partial Q_i\\ x \sim y}}
	\bigl(u_d(y)-u_d(x)\bigr)^2
	= (8i+4)
	\sum_{\substack{x\in Q_i,\, y\in\partial Q_i\\ x \sim y}}
	\bigl(u_d(y)-u_d(x)\bigr)^2 .
\end{equation}
Combining \eqref{ti1} and \eqref{ti2}, we conclude that
\[
-4\pi\sum_{j=1}^{M} n_j\, u_d(p_j)
\geqslant \sum_{i = d_0}^{d}
\frac{1}{8 i+4}\left(\int_{Q_i} \Delta u_d\right)^{2}
= N^{2}
\sum_{i = d_0}^{d} \frac{1}{8 i+4}.
\]

Hence,
\[
4\pi\sum_{j=1}^{M} n_j\, u_d(p_j)
\longrightarrow -\infty
\qquad \text{as } d\to+\infty,
\]
which contradicts  \(f \le u_d \le 0\).

From the previous estimates, we can  select a point $x_0\in\mathbb{Z}^2$ satisfying 
\[
\lim_{\lambda\to 0^{+}} f_\lambda(x_0) = -\infty .
\]

For any neighbour $y$ of $x_0$ (that is, $y\sim x_0$), observe that the discrete Laplacian at $y$ satisfies
\begin{equation}\label{3.4}
\Delta f_\lambda(y)
\le f_\lambda(x_0)- 2n\, f_\lambda(y).
\end{equation}

Moreover, using Eq.~(\ref{1.2}), we have the uniform bound
\begin{equation}\label{3.5}
|\Delta f_\lambda(x)|
\le |g(x)| + \lambda e^{f_\lambda(x)}\bigl(1-e^{a f_\lambda(x)}\bigr)
\le N+\lambda \sum_{x\in\mathbb{Z}^n} e^{f_\lambda(x)}\bigl(1-e^{a f_\lambda(x)}\bigr)
\le 2N ,
\end{equation}
 for all $x\in\mathbb{Z}^n$.

Combining (\ref{3.4}) and (\ref{3.5}) gives
\[
f_\lambda(y)
\le \frac{1}{2n}\Bigl( f_\lambda(x_0) + 2N \Bigr),
\]
and therefore
\[
\lim_{\lambda\to 0^{+}} f_\lambda(y)
= -\infty .
\]

Since the lattice $\mathbb{Z}^2$ is connected, iterating this argument along any finite path implies
\[
f_\lambda(x)\longrightarrow -\infty
\qquad\text{for every } x\in\mathbb{Z}^2,
\quad \text{as } \lambda\to 0^{+}.
\]

We now turn to the case $n=3$. Set
\[
\phi_n(x) := 4\pi \sum_{j=1}^{M} n_j G_n(x-p_j).
\]
The function $\phi_n$ is characterized by
\[
\begin{cases}
	\Delta \phi_n = 4\pi \displaystyle\sum_{j=1}^{M} n_j\, \delta_{p_j} = g 
	& \text{on } \mathbb{Z}^n,\\[2mm]
	\phi_n(x) \to 0 
	& \text{as } d(x)\to+\infty .
\end{cases}
\]
Observe that 
\begin{equation}\label{3.6}
\Delta(\phi_n - f_\lambda) \ge 0
\end{equation} and \[
\lim_{d(x)\to+\infty} \bigl(\phi_n - f_\lambda\bigr)(x) = 0.
\]
We claim that
\begin{equation}\label{3.7}
\phi_n(x) \le f_\lambda(x) \le 0, 
\qquad \forall\, x \in \mathbb{Z}^n .
\end{equation}
In fact, if this is false, then there must exist some point $x_0$ such that $\phi_n - f_\lambda$ attain a positive maximum at $x_0$. Hence, 
\[\Delta(\phi_n - f_\lambda)(x_0)<0,\] which contradicts (\ref{3.6}). 
Consequently,
\[
\lim_{\lambda\rightarrow 0^+}f_\lambda(x) =f(x) ,\; \forall\, x\in\mathbb{Z}^n,
\]
and $f$ satisfies 
\[
\phi_n(x) \le f(x) \le 0, \qquad \forall\, x\in\mathbb{Z}^n.
\]
Observe that 
\[
	\Delta(\phi_n - f) = 0
\] and \[
\lim_{d(x)\to+\infty} (\phi_n - f )= 0.
\]
Proceeding as in the derivation of \eqref{3.7}, we arrive at $f(x)=\phi_n(x)$ for all $x\in \mathbb{Z}^n$.

\vskip 30 pt
\noindent{\bf ACKNOWLEDGMENTS}

This work is   partially  supported by  the National Key Research and Development Program of China 2020YFA0713100.

\bibliographystyle{plain}
\bibliography{D:/Reference/CHS.bib}	
	
\end{document}